\documentclass[11pt,reqno,UTF8,twoside]{amsart}
\usepackage{mathrsfs}
\usepackage{amsfonts,amssymb,amsmath,amsthm}
\usepackage{cite}
\usepackage[colorlinks=true,citecolor=red]{hyperref}
\usepackage{titletoc}
\usepackage{geometry}
\usepackage{bm}
\usepackage{indentfirst}
\usepackage{graphicx}
\usepackage{float} 
\usepackage{booktabs}
\usepackage{longtable}
\usepackage{subfigure}
\usepackage{stmaryrd}
\usepackage{enumerate}
\usepackage{tikz}
\usepackage{ulem}
\usepackage{color,soul}
\usepackage{setspace}
\usepackage{stmaryrd}
\usepackage[pagewise]{lineno} 
\allowdisplaybreaks[2]
\usepackage{appendix}
\usepackage{cases}
\usepackage{todonotes}
\usepackage{enumitem}

\numberwithin{equation}{section}
\newtheorem{theorem}{Theorem}[section]
\newtheorem{lemma}[theorem]{Lemma}
\newtheorem{corollary}[theorem]{Corollary}
\newtheorem{proposition}[theorem]{Proposition}

\newtheorem{remark}[theorem]{Remark}
\newtheorem{definition}[theorem]{Definition}
\newtheorem{example}[theorem]{Example}

\newcommand{\N}{\mathbb{N}}
\newcommand{\R}{\mathbb{R}}
\newcommand{\Z}{\mathbb{Z}}
\newcommand{\E}{\mathbb{E}}

\newcommand{\leb}{\operatorname{Leb}}
\newcommand{\dist}{\operatorname{dist}}

\hypersetup{linkcolor=blue}

\makeatletter
\@namedef{subjclassname@2020}{\textup{2020} Mathematics Subject
Classification}
\makeatother

\begin{document}
     
   \title[ Strong Feller via moment method]
	{{\Large A {\MakeLowercase{note on the  strong} F{\MakeLowercase{eller property via the moment method}}}
    }}

    \author[Z. Liu, S. Xiang]{ {\small Z\MakeLowercase{iyu} L\MakeLowercase{iu}, S\MakeLowercase{hengquan} X\MakeLowercase{iang}}}
    
    \address[Ziyu Liu]{School of Mathematics and Physics, University of Science and Technology Beijing, 100083, Beijing, China.}
    \email{ziyu@ustb.edu.cn}

    \address[Shengquan  Xiang]{School of Mathematical Sciences, Peking University, 100871, Beijing, China.}
    \email{shengquan.xiang@math.pku.edu.cn}
    
\begin{abstract}
    This note studies the 1D stochastic heat equation  driven by a one-dimensional Brownian motion. We prove that the associated Markov process satisfies the strong Feller property under mild non-degeneracy conditions.  The approach combines Malliavin calculus with the moment method from PDE control theory.
\end{abstract}

  \subjclass[2020]{
    60H15, 
    60H07, 
    93B05,  
    93C20. 
    }
	
        \keywords{Strong Feller; degenerate noise; controllability; Malliavin calculus; moment method}

\maketitle

\section{Introduction} \label{Sec 1}

 \subsection{Backgrounds and motivations}\label{Sec 1.1}

The strong Feller property plays a fundamental role in the study of long-time behavior of SPDEs.  When combined with irreducibility-type conditions, it often implies uniqueness of invariant measures and convergence to equilibrium; see, for example, the Doob theorem \cite{DPZ-96} and Harris-type theorems \cite{GM-05}. More recently, the strong Feller property has been established for singular SPDEs \cite{Hairer-18}, and has also been used in the study of Lyapunov exponents for stochastic dynamical systems \cite{BBPS-22c}.

For SPDEs, verifying the strong Feller property is in general a delicate issue, especially when the noise is degenerate. The classical monographs by Da Prato and Zabczyk \cite{DPZ-96,DPZ-92,DPZ-91} systematically study the strong Feller property and its connection with general SPDE models. In particular, for linear SPDEs, these results show that the strong Feller property is equivalent to the null-controllability of the associated deterministic system. In terms of smoothing properties, Goldys and Peszat \cite{GP-23,GP-26} recently derived differentiability of all orders for linear SPDEs by using the null-controllability and gradient estimates. Another widely used approach to the strong Feller property is based on Malliavin calculus: gradient estimates derived from the Bismut--Elworthy--Li formula (see e.g. \cite{EL-94,PZ-95,WZ-13,Zhang-13}) provide a convenient tool for this purpose.

\vspace{3mm}
In the present note, we study the strong Feller property for the 1D stochastic heat equation driven by a highly degenerate noise: {\it the equation is forced by a single Brownian motion}. The proof combines tools from PDE control theory with Malliavin calculus. On the one hand, Malliavin calculus relates the strong Feller property to null controllability of the associated deterministic system. On the other hand, in the 1D PDE setting, null controllability can be addressed using well-developed techniques such as the moment method.  While controllability issues are extensively studied for a wide range of deterministic systems, this perspective may also be useful in studying stochastic equations and noise structures beyond the reach of standard probabilistic techniques.

     \subsection{Main result}\label{Sec 1.2}
    Consider the stochastic heat equation, 
    \begin{equation}\label{H equation}
    \begin{cases}
         dy-\Delta ydt=f(x)dW_t,\quad x\in (0,1),\;t>0,\\
         y|_{x=0,1}=0,\\
        y(0,\cdot)=y_0(\cdot).
    \end{cases}
    \end{equation}
    Here $W$ denotes a one-dimensional standard Brownian motion. We use $\|\cdot\|$ and $\langle\cdot,\cdot\rangle$ to denote the usual  $L^2$-norm and inner product. Let $\{e_n\}_{n\in\N^+}$ be eigenfunctions of $-\Delta$ with Dirichlet boundary condition on $(0,1)$, with corresponding eigenvalues $\lambda_n$, i.e., 
    \begin{equation*}
        e_n(x)=\sqrt{2}\sin(\pi n x),\quad \lambda_n=\pi^2n^2,\quad n\in\N^+.
    \end{equation*}
    
    The spatial structure of the noise is described by the function 
    \begin{equation*}
        f\in H^{-s}(0,1):=\left\{f=\sum_{n\in\N^+} f_ne_n:\sum_{n\in\N^+}\lambda_n^{-s}|f_n|^2<\infty\right\}
    \end{equation*}
    with $0\leq s<1$.  Under these settings, by \cite[Section 5]{DPZ-96}, there exists a unique mild solution $y\in C([0,\infty);L^2(0,1))$.

    Recall that the strong Feller property (see \cite[Section 4.1]{DPZ-96}) is formulated as follows.
    \begin{definition}
    Let $\{\xi_t\}_{t\geq0}$ be a Markov process on a Polish space $Z$. We say that the Markov semigroup $\{P_t\}_{t\geq 0}$ associated with $\{\xi_t\}_{t\geq0}$ is strong Feller at time $T>0$ if for any bounded measurable function $\varphi\colon Z\rightarrow \R$, $\xi\mapsto P_T\varphi(\xi)$ is continuous on $Z$.      
    \end{definition}

     Our main result is  the following.
\begin{theorem}\label{thm H}
   For the equation \eqref{H equation}, assume there exist constants $C,\alpha>0$ and $s\in[0,1)$ such that $f\in H^{-s}(0,1)$ and
    \begin{equation}\label{eq f}
        |\langle f,e_n\rangle|\geq C e^{-\alpha \pi^2n^2}\quad \forall\, n\in\N^+.
    \end{equation}
    Then the Markov semigroup $\{P_t\}_{t\geq 0}$ on $L^2(0,1)$ associated with equation \eqref{H equation} is strong Feller at any time $T>\alpha$.
\end{theorem}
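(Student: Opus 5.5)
The equation is linear with additive noise, so $y(t)=S(t)y_0+\int_0^t S(t-r)f\,dW_r$, where $S(t)=e^{t\Delta}$. The plan is to verify strong Feller by a Bismut--Elworthy--Li / Malliavin integration by parts. This reduces the problem to a null-controllability statement for the deterministic system
\[
\partial_t z-\Delta z=f(x)u(t),\qquad z(0)=h,
\]
with a scalar control $u\in L^2(0,T)$. That statement we will establish by the moment method.

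\textbf{Step 1 (reduction).} Fix $h\in L^2(0,1)$. Suppose there is a control $u_h\in L^2(0,T)$ with $\|u_h\|_{L^2(0,T)}\le K\|h\|$ that steers $h$ to $0$ at time $T$, i.e.
\[
S(T)h+\int_0^T S(T-r)f\,u_h(r)\,dr=0.
\]
The derivative of $y_T$ in $y_0$ along $h$ is $S(T)h$. The Malliavin derivative of $y_T$ in the direction $v$ is $\int_0^T S(T-r)f\,v(r)\,dr$. Taking $v=-u_h$ (deterministic, hence adapted), for $\varphi\in C_b^1$ we get
\[
\langle \nabla P_T\varphi(y_0),h\rangle=-\E\Big[\varphi(y_T)\int_0^T u_h\,dW\Big].
\]
This gives $|\langle\nabla P_T\varphi,h\rangle|\le K\|\varphi\|_\infty\|h\|$. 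Hence $P_T\varphi$ is Lipschitz with constant $K\|\varphi\|_\infty$, uniformly in $\varphi$. By the standard approximation argument (e.g.\ \cite{DPZ-96}), this extends to all bounded measurable $\varphi$, which is strong Feller. Equivalently, since the equation is linear, one may invoke the Da Prato--Zabczyk equivalence between strong Feller and null-controllability, i.e.\ $\operatorname{Im}S(T)\subset\operatorname{Im}Q_T^{1/2}$.

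\textbf{Step 2 (moment problem).} Expand $h=\sum h_ne_n$ and write $f_n=\langle f,e_n\rangle$. Projecting onto $e_n$, null-controllability reduces to
\[
f_n\int_0^T e^{-\lambda_n(T-r)}u(r)\,dr=-e^{-\lambda_nT}h_n,
\]
that is,
\[
\int_0^T e^{-\lambda_n\tau}\,\tilde u(\tau)\,d\tau=-\frac{e^{-\lambda_nT}h_n}{f_n},\qquad \tilde u(\tau)=u(T-\tau).
\]
Since $\sum\lambda_n^{-1}<\infty$, the Fattorini--Russell construction gives a biorthogonal family $\{q_m\}\subset L^2(0,T)$ to $\{e^{-\lambda_n\tau}\}$ with
\[
\|q_m\|_{L^2(0,T)}\le C_\varepsilon e^{\varepsilon\lambda_m}\quad\text{for every }\varepsilon>0 .
\]
Because $\lambda_n=\pi^2n^2$, the gaps are uniform, and in fact the sharper bound $C e^{c\sqrt{\lambda_m}}$ holds. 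Set
\[
\tilde u=-\sum_m \frac{e^{-\lambda_mT}h_m}{f_m}\,q_m .
\]
By \eqref{eq f}, the coefficients satisfy $|e^{-\lambda_mT}h_m/f_m|\le C^{-1}e^{-(T-\alpha)\lambda_m}|h_m|$. Choosing $\varepsilon<T-\alpha$, we obtain
\[
\|\tilde u\|\le C'\sum_m e^{-(T-\alpha-\varepsilon)\pi^2m^2}|h_m|\le K\|h\|
\]
by Cauchy--Schwarz. This is exactly where $T>\alpha$ is used.

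\textbf{Step 3 (checking the control works in $L^2$).} We must confirm that the controlled state with $f\in H^{-s}$ makes sense. The control term is
\[
\sum_n f_n\int_0^T e^{-\lambda_n(T-r)}u(r)\,dr\,e_n ,
\]
and its $n$-th coefficient is bounded by $|f_n|\lambda_n^{-1/2}\|u\|$. Since $s<1$, we have $\sum|f_n|^2\lambda_n^{-1}\le\sum|f_n|^2\lambda_n^{-s}<\infty$. So the series converges in $L^2$, the moment identities hold coefficientwise, and the final state is $0$. The Malliavin calculus in Step 1 is then legitimate because $y_T$ is Gaussian and affine in $W$.

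The main obstacle is Step 2: the biorthogonal family estimate on a finite interval $[0,T]$ with subexponential growth $e^{\varepsilon\lambda_m}$. I would quote Fattorini--Russell, or build $q_m$ explicitly via the Paley--Wiener theorem from a Weierstrass-type product $\prod_n(1+iz/\lambda_n)$ suitably multiplied by a decaying factor. The rest consists of routine verifications.
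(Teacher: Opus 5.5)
Your proposal is correct and takes essentially the same route as the paper. The Bismut--Elworthy--Li identity with the deterministic direction $-u_h$ reduces strong Feller to null-controllability of the scalar-control heat equation. That controllability is then obtained by the moment method, with a biorthogonal family to $\{e^{-\lambda_n t}\}$ and a Cauchy--Schwarz estimate that uses $T>\alpha$. The only difference is cosmetic: you quote the Fattorini--Russell bound $C_\varepsilon e^{\varepsilon\lambda_m}$, whereas the paper derives the explicit bound $Ce^{\pi n}$ from Schwartz's restriction theorem and the product formula for $d_{n,\infty}$; either bound suffices.
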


Let us mention that Theorem \ref{thm H} gives a strong Feller result for a rather degenerate noise. The degeneracy is twofold. From the probabilistic viewpoint, the equation is forced by only one Brownian motion, but the regularization is obtained for the infinite-dimensional Markov semigroup. This is also related to the asymptotic strong Feller theory for SPDEs with finite-dimensional degenerate noise; for instance, \cite{HM-06} established the asymptotic strong Feller property for the 2D stochastic Navier--Stokes equations driven by four-dimensional Brownian noise.

From the spatial viewpoint, the noise acts through a single profile $f$, which may be as singular as a point input $f=\delta_{p}$ for suitable $p\in(0,1)$; see Example \ref{ex1} below. This spatially singular forcing is in the same direction as the study of spatially localized noise acting on an open set, as considered in \cite{Shi-15, Shi-21, LWXZZ-24,LXZ-26,CXZZ-25}.

\begin{remark}
    It is worth noting that condition~\eqref{eq f} is rather mild, which admits a wide class of admissible noise inputs.  On the other hand, the result is also  sharp in the sense that if the noise input $f$ is degenerate in at least one Fourier mode, i.e., if
     \begin{equation*}
         \langle f,e_m\rangle=0\quad \text{for some }m\in\N^+,
     \end{equation*}
     then the associated Markov semigroup cannot satisfy the strong Feller property; see {\rm Remark \ref{rem1}} for more details.   
\end{remark}

      \begin{example}\label{ex1}
         Let $f=\delta_p$ be the Dirac delta distribution at a point $p\in(0,1)\setminus\mathbb Q$. Then
     \begin{equation*}
         |\langle \delta_p,e_n\rangle|=\sqrt{2}|\sin(\pi np)|,\quad n\in\N^+,
     \end{equation*}
     and $\delta_p\in H^{-s}(0,1)$ for any $s\in(1/2,1)$. Moreover, for every $\alpha>0$, there exists a Borel set $\Gamma_\alpha\subset(0,1)\setminus\mathbb Q$ with full Lebesgue measure such that, for every $p\in\Gamma_\alpha$, there exists a constant $C_{\alpha,p}>0$ satisfying
    \begin{equation*}
        |\langle \delta_p,e_n\rangle|\geq C_{\alpha,p} e^{-\alpha \pi^2n^2}\quad \forall\, n\in\N^+.
    \end{equation*}
     Indeed, for $q\in\R$,
        \begin{equation*}
            2\dist(q,\Z)\leq |\sin(\pi q)|\leq \pi\dist(q,\Z).
        \end{equation*}
        Consider the sequence of sets
        \begin{equation*}
            A_n:=\{p\in[0,1]:\dist(np,\Z)<e^{-\alpha \pi^2n^2}\}.
        \end{equation*}
        Since
        \begin{equation*}
            \leb(A_n)\leq 4e^{-\alpha\pi^2n^2},
        \end{equation*}
        we have $\sum_{n\geq1}\leb(A_n)<\infty$. By the Borel--Cantelli lemma, for a.e. $p\in[0,1]$ there exists $N(p)$ such that
        \begin{equation*}
            \dist(np,\Z)\geq e^{-\alpha\pi^2n^2},\quad n\geq N(p).
        \end{equation*}
        For irrational $p$, the remaining finitely many coefficients $|\sin(\pi np)|$, $1\leq n<N(p)$, are nonzero, which ensures the desired estimate.
     \end{example}

  While the present note focuses on the basic example of the 1D stochastic heat equation, the same approach extends more broadly. Related smoothing results of all orders for transition semigroups of linear SPDEs were obtained in \cite{GP-26}, via $n$-fold It\^o integral, gradient estimates and null-controllability. The proof of Theorem~\ref{thm H} combines Malliavin calculus with the moment method from PDE control theory.

    \begin{itemize}[leftmargin=2em]
        \item[\tiny$\bullet$] Malliavin calculus is a standard tool for establishing smoothing properties of Markov semigroups associated with SPDEs, typically through gradient estimates for the transition semigroup. In particular, the strong Feller property can often be obtained via the Bismut–Elworthy–Li formula; see, e.g. \cite{EL-94,Zhang-13}.  Malliavin-type arguments can also lead to the weaker notion of asymptotic strong Feller, introduced by Hairer and Mattingly \cite{HM-06}. See also \cite{Hairer-11,Nualart-06,LXZ-26} for further applications of Malliavin calculus, and \cite{GLLL-24,Szarek-06} for other smoothing properties of Markov semigroups.
    
        \item[\tiny$\bullet$]The moment method is a classical approach in control theory for studying controllability of linear evolution equations. It is based on spectral decompositions and reduces controllability to an infinite-dimensional moment problem; see, e.g. Fattorini--Russell~\cite{FR-71}. Also see  Beauchard\cite{Beauchard-05}, where the moment method is combined with Nash--Moser scheme. For 1D parabolic equations, this method provides explicit and constructive controllability results under mild assumptions, and has been widely used in the study of null and exact controllability; see, e.g. \cite{Russell-78,TW-09,Coron-07,GHXZ-22}.

    \end{itemize}

\section{Probabilistic setup and Malliavin calculus}\label{Sec 2}

In this section, we recall the Malliavin calculus approach to the strong Feller property for general SPDE models, which essentially relies on the Bismut--Elworthy--Li formula.

\vspace{3mm}
Let $(\mathcal H,\langle\cdot,\cdot\rangle_{\mathcal H})$ and $(U,\langle\cdot,\cdot\rangle_U)$ be two separable Hilbert spaces. We consider the abstract linear stochastic evolution equation
\begin{equation}\label{abs equation}
\begin{cases}
     dX(t)=AX(t)dt+BdW_t,\quad t>0,\\
     X(0)=x,
\end{cases}
\end{equation}
where $A:D(A)\subset \mathcal H\to \mathcal H$ generates an analytic semigroup $\{S(t)\}_{t\geq0}$ on $\mathcal H$. We assume that this semigroup is exponentially stable, namely, there exist constants $M\geq1$ and $c>0$ such that
\begin{equation*}
    \|S(t)\|_{\mathcal L(\mathcal H)}\leq Me^{-ct},\quad t\geq0.
\end{equation*}
Here $W$ is a cylindrical Wiener process on $U$.

\vspace{3mm}
Since $\{S(t)\}_{t\geq0}$ is exponentially stable, the generator $A$ is invertible; see, e.g., \cite{EN-00}. Following the interpolation--extrapolation framework of \cite[Section 6]{Amann-88}, we set $\mathcal H_0=\mathcal H$ and define $\mathcal H_{-1}$ as the completion of $\mathcal H$ with respect to the norm
\begin{equation*}
    \|x\|_{\mathcal H_{-1}}:=\|A^{-1}x\|_{\mathcal H},\quad x\in \mathcal H.
\end{equation*}
For $\eta\in(0,1)$, we set
\begin{equation}\label{eq H-eta}
    \mathcal H_{-\eta}:=[\mathcal H_{-1},\mathcal H]_{1-\eta},
\end{equation}
where $[\cdot,\cdot]_\theta$ denotes the complex interpolation functor. We endow $\mathcal H_{-\eta}$ with the corresponding interpolation norm given in \cite[Section 5]{Amann-88}.

\vspace{3mm}
By \cite[Theorem 6.1]{Amann-88}, $\{S(t)\}_{t\geq0}$ extends consistently to an analytic semigroup $\{S_{-\eta}(t)\}_{t\geq0}$ on $\mathcal H_{-\eta}$, and this extension satisfies
\begin{equation*}
    S_{-\eta}(t)x=S(t)x,\quad x\in \mathcal H,\;t\geq0.
\end{equation*}
Moreover, applying \cite[Theorem 6.1]{Amann-88}, for every $T>0$, there exists $C_T>0$ such that
\begin{equation}\label{eq extrap smoothing}
    \|S_{-\eta}(t)\|_{\mathcal L(\mathcal H_{-\eta};\mathcal H)}\leq C_Tt^{-\eta},\qquad 0<t\leq T.
\end{equation}
Consequently, if $B\in\mathcal L(U;\mathcal H_{-\eta})$, then $S_{-\eta}(t)B\in\mathcal L(U;\mathcal H)$ for every $t>0$.

\vspace{3mm}
We also assume that there exist $\eta\in[0,1)$ and $\beta\in(0,1/2)$ such that\footnote{This formulation allows $B$ to be {\it unbounded} as an operator from $U$ to $\mathcal H$, since it is only required to take values in the extrapolation space $\mathcal H_{-\eta}$.}
\begin{equation*}
    B\in\mathcal L(U;\mathcal H_{-\eta})
\end{equation*}
and, for every $T>0$,
\begin{equation}\label{HS condition}
    \int_0^T t^{-2\beta}\|S_{-\eta}(t)B\|_{\mathcal L_2(U;\mathcal H)}^2dt<\infty.
\end{equation}
Here $\mathcal L_2(U;\mathcal H)$ denotes the space of Hilbert--Schmidt operators from $U$ to $\mathcal H$, endowed with the Hilbert--Schmidt norm.

\vspace{3mm}
Under these assumptions,  by Lemma \ref{lem sto conv}, for every $x\in \mathcal H$ and $T>0$, equation \eqref{abs equation} admits a unique mild solution
\begin{equation*}
    X\in L^2(\Omega;C([0,T];\mathcal H)),
\end{equation*}
given by
\begin{equation}\label{mild abs equation}
    X(t)
    =
    S(t)x+\int_0^t S_{-\eta}(t-s)BdW_s,
    \qquad t\in[0,T].
\end{equation}

\vspace{3mm}
In particular, the strong Feller property is closely related to the null controllability of its linearization.  Specifically, consider the following controlled equation
  \begin{equation}\label{control equation}
    \begin{cases}
        dY=AYdt+Bh(t)dt,\quad t\in(0,T),\\
        Y_0=y,
    \end{cases}
    \end{equation}
    where $h\in L^2(0,T;U)$ denotes a deterministic control term. 

    \begin{definition}
        We say that $\{Y_t\}_{t\geq 0}$ is null-controllable at time $T>0$ if there exists  a constant $C>0$ such that for any $y\in \mathcal H$, there exists $h\in L^2(0,T;U)$ satisfying
        \begin{equation*}
            Y_T=0,\quad \|h\|_{L^2(0,T;U)}\leq C\|y\|_{\mathcal H} .
        \end{equation*}
    \end{definition}
    
\begin{proposition}\label{prop SF}
    Assume that system \eqref{control equation} is null-controllable at time $T>0$. Then there exists a constant $C>0$ such that for any $\varphi\colon \mathcal H\rightarrow \R$ with $\|\varphi\|_{\infty}$ and $\|\nabla\varphi\|_{\infty}$ finite, the Markov semigroup $\{P_t\}_{t\geq 0}$ on $\mathcal H$ associated with $\{X_t\}_{t\geq 0}$ satisfies that
    \begin{equation*}
        \|\nabla P_T\varphi(x)\|_{\mathcal H} \leq C\|\varphi\|_{\infty}\quad 
        \forall\, x\in \mathcal H.
    \end{equation*}
\end{proposition}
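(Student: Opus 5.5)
Since the equation is linear, the solution is affine in the initial datum: by \eqref{mild abs equation}, $X^x(T)=S(T)x+\int_0^T S_{-\eta}(T-s)B\,dW_s$. So for a direction $v\in\mathcal H$ the derivative of the flow is deterministic, $D_vX^x(T)=S(T)v$, and
\begin{equation*}
\langle\nabla P_T\varphi(x),v\rangle_{\mathcal H}=\E\big[\langle\nabla\varphi(X^x(T)),S(T)v\rangle_{\mathcal H}\big].
\end{equation*}
Differentiation under the expectation is justified because $\nabla\varphi$ is bounded. The plan is to replace this term by a Malliavin derivative in the noise direction, following the Bismut--Elworthy--Li argument, and then integrate by parts.

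\textbf{Step 1: choice of control.} Null controllability of \eqref{control equation} gives, for the initial datum $y=v$, a control $h=h_v\in L^2(0,T;U)$ with $Y_T=0$ and $\|h_v\|_{L^2(0,T;U)}\le C\|v\|_{\mathcal H}$. By Duhamel's formula in $\mathcal H_{-\eta}$, which is well posed thanks to \eqref{eq extrap smoothing}, the condition $Y_T=0$ reads
\begin{equation*}
S(T)v+\int_0^T S_{-\eta}(T-s)Bh_v(s)\,ds=0 .
\end{equation*}
We use the control $-h_v$ in place of $h_v$. The Malliavin derivative of $X^x(T)$ in the Cameron--Martin direction $-h_v$ is then
\begin{equation*}
\mathcal D^{-h_v}X^x(T)=-\int_0^T S_{-\eta}(T-s)Bh_v(s)\,ds=S(T)v .
\end{equation*}

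\textbf{Step 2: integration by parts.} By the chain rule,
\begin{equation*}
\E\big[\langle\nabla\varphi(X^x(T)),S(T)v\rangle_{\mathcal H}\big]=\E\big[\mathcal D^{-h_v}\big(\varphi(X^x(T))\big)\big].
\end{equation*}
Since $h_v$ is deterministic, hence adapted, the Malliavin integration by parts formula (Cameron--Martin/Girsanov) gives
\begin{equation*}
\E\big[\mathcal D^{-h_v}F\big]=-\E\Big[F\int_0^T\langle h_v(s),dW_s\rangle_U\Big]
\end{equation*}
for smooth cylindrical $F$. This extends to $F=\varphi(X^x(T))$ by approximation: replace the cylindrical Wiener process by finite-dimensional projections, or simply note that $\varphi(X^x(T))\in\mathbb D^{1,2}$ because $\varphi$ is $C^1$ with bounded gradient and $X^x(T)$ is Gaussian.

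\textbf{Step 3: the bound.} Cauchy--Schwarz and the It\^o isometry give
\begin{equation*}
|\langle\nabla P_T\varphi(x),v\rangle_{\mathcal H}|\le\|\varphi\|_\infty\,\|h_v\|_{L^2(0,T;U)}\le C\|\varphi\|_\infty\|v\|_{\mathcal H}.
\end{equation*}
Taking the supremum over $\|v\|_{\mathcal H}=1$ yields the claim, with the constant independent of $x$.

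\textbf{Main obstacle.} The delicate point is rigor in the extrapolated setting. $B$ is unbounded into $\mathcal H$, so we must check two things:
\begin{itemize}
\item that $s\mapsto S_{-\eta}(T-s)Bh_v(s)$ is Bochner integrable in $\mathcal H$;
\item that the Malliavin derivative of the stochastic convolution is indeed given by this deterministic convolution.
\end{itemize}
I would handle this by Galerkin/finite-dimensional approximation of $W$ and pass to the limit using \eqref{HS condition} and \eqref{eq extrap smoothing}. Alternatively, one can avoid Malliavin calculus entirely. Apply Girsanov's theorem to the shifted noise $W+\epsilon\int_0^\cdot h_v$: this yields $P_T\varphi(x+\epsilon v)$ as an expectation with respect to the original noise weighted by a Girsanov density. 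Then differentiate in $\epsilon$ at $\epsilon=0$, which is legitimate because $h_v$ is deterministic and the exponential martingale is explicit.
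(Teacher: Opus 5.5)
Your proposal is correct and follows the paper's argument. You take the null control $h_v$ for the initial datum $v$ as the Cameron--Martin direction, so that the Malliavin derivative of $X_T$ along it reproduces $\mp S(T)v$. You then apply Malliavin integration by parts and bound $\E\bigl|\int_0^T\langle h_v(t),dW_t\rangle_U\bigr|$ by $\|h_v\|_{L^2(0,T;U)}\le C\|v\|_{\mathcal H}$ via the It\^o isometry, exactly as the paper does.

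Using $-h_v$ is only sign bookkeeping. The Bochner-integrability point you flag is settled by \eqref{HS condition} and Cauchy--Schwarz, since the operator norm is dominated by the Hilbert--Schmidt norm; \eqref{eq extrap smoothing} alone would not suffice for $\eta\ge 1/2$.
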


\begin{remark}
    A related result can be found in {\rm \cite[Theorem 7.2.1]{DPZ-96}} for linear SPDEs with bounded noise coefficient $B\in\mathcal{L}(U;\mathcal H)$. There, the strong Feller property of the associated Ornstein--Uhlenbeck semigroup is characterized by a null-controllability condition, through the structure of Gaussian measures and the Cameron--Martin theorem.

    Here we use Malliavin calculus and the Bismut--Elworthy--Li formula, following a standard route close to {\rm \cite{EL-94}} and its later variants. The proof is included for completeness and to emphasize that $\mathcal{D}X_t$ solves a deterministic control equation along Cameron--Martin directions, while null controllability provides a control compensating the linearized dynamics. In this sense, the Bismut--Elworthy--Li formula used here can be viewed as a duality relation between Malliavin differentiation and deterministic control trajectories.
\end{remark}

\begin{proof}[Proof of Proposition \ref{prop SF}]
    To indicate the initial condition and the random input, we also  denote $X_t$ by $X(t,x,W)$.  The Malliavin derivative $\mathcal{D}$  associated with $\{X_t\}_{t\geq 0}$ is then defined as, for each $T>0$ and $h\in L^2(0,T;U)$,
    \begin{equation*}
        \langle \mathcal{D}X_t,h\rangle_{L^2(0,T;U)}=\lim\limits_{\varepsilon\rightarrow 0}\frac{X(t,x,W+\varepsilon \int_0^\cdot h(s)ds)-X(t,x,W)}{\varepsilon},\quad t>0,
    \end{equation*}
    where the limit is taken in $L^2(\Omega;\mathcal H)$. Then for $V_t= \langle \mathcal{D}X_t,h\rangle_{L^2(0,T;U)}$, it follows that $V$ satisfies equation\footnote{Here $V$ is deterministic, since it is the linearized equation associated with a linear SPDE. In the nonlinear case, the corresponding formulation contains stochastic terms and is more involved; see, e.g., \cite{LXZ-26}.}
    \begin{equation*}
    \begin{cases}
        dV=AVdt+Bh(t)dt,\quad t\in(0,T),\\
        V_0=0,
    \end{cases}
    \end{equation*}
    Thus we compute that
    \begin{equation}\label{eq M-deri}
        \langle \mathcal{D}X_T,h\rangle_{L^2(0,T;U)}=\int_0^TS_{-\eta}(T-t)Bh(t)dt.
    \end{equation}

    \vspace{3mm}

    Now consider the mild solution of \eqref{control equation}, which has the form of 
    \begin{equation*}
        Y_t=S(t)y+\int_0^tS_{-\eta}(t-s)Bh(s)ds,\quad y\in \mathcal H,\; t>0.
    \end{equation*}
    Under the assumption of Proposition \ref{prop SF}, for any $y\in \mathcal H$, there exists $h\in L^2(0,T;U)$ such that
    \begin{equation*}
        Y_T=0,
    \end{equation*}
    implying that
    \begin{equation*}
        S(T)y=-\int_0^TS_{-\eta}(T-t)Bh(t)dt.
    \end{equation*}

    \vspace{3mm}
    Therefore, we then derive the Bismut--Elworthy--Li formula in the present setting. For any $y\in \mathcal H$, we have
    \begin{align*}
        \langle \nabla P_T\varphi(x),y\rangle_{\mathcal H}        &=\E\left(\nabla\varphi(X_T)S(T)y\right)\\
        &=-\E\left(\nabla\varphi(X_T)\int_0^TS_{-\eta}(T-t)Bh(t)dt\right)\\
        &=-\E\left(\nabla\varphi(X_T)\langle \mathcal{D}X_T,h\rangle_{L^2(0,T;U)}\right)\\
        &=-\E\langle \mathcal{D}\varphi(X_T),h\rangle\\
        &=-\E \left(\varphi(X_T)\int_0^T\langle h(t),dW_t\rangle_U\right),
    \end{align*}
    where the last equality follows from the integration by parts formula, see e.g. \cite{Nualart-06}.  Noting that $h\in L^2(0,T;U)$ is deterministic, the stochastic integral $\int_0^T\langle h(t),dW_t\rangle_U$ is thus a usual Itô integral.

    Consequently, we conclude that
    \begin{align*}
        |\langle \nabla P_T\varphi(x),y\rangle_{\mathcal H}|&\leq \left|\E \left(\varphi(X_T)\int_0^T\langle h(t),dW_t\rangle_U\right)\right|\leq \|\varphi\|_{\infty}\E\left|\int_0^T\langle h(t),dW_t\rangle_U\right|\\
        &\leq \|\varphi\|_{\infty}\left(\E\left|\int_0^T\langle h(t),dW_t\rangle_U\right|^2\right)^{\frac{1}{2}}=\|\varphi\|_{\infty}\|h\|_{L^2(0,T;U)}\\
        &\leq C\|\varphi\|_{\infty}\|y\|_{\mathcal H} . 
    \end{align*}
   This completes the proof of Proposition \ref{prop SF}.    
\end{proof}

 Taking into account \cite[Lemma 7.1.5]{DPZ-96},  Proposition \ref{prop SF} therefore implies the strong Feller property.

\begin{corollary}\label{coro SF}
    Assume that system \eqref{control equation} is null-controllable  at time $T>0$. Then the Markov semigroup $\{P_t\}_{t\geq 0}$ on $\mathcal{H}$ associated with $\{X_t\}_{t\geq 0}$ is strong Feller at time $T$. 
\end{corollary}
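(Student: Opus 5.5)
Corollary \ref{coro SF} follows from Proposition \ref{prop SF} by a standard approximation argument: gradient bounds for smooth test functions upgrade to continuity of $P_T\varphi$ for merely bounded measurable $\varphi$.

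\emph{Step 1: Lipschitz estimate for smooth test functions.} Let $\varphi$ be bounded with bounded Fréchet derivative. Proposition \ref{prop SF} gives $\|\nabla P_T\varphi(x)\|_{\mathcal H}\le C\|\varphi\|_\infty$ for all $x$. The mean value theorem along the segment from $x$ to $z$ then yields
\begin{equation*}
|P_T\varphi(x)-P_T\varphi(z)|\le C\|\varphi\|_\infty\|x-z\|_{\mathcal H},
\end{equation*}
with $C$ independent of $\varphi$. Here I use that $P_T\varphi$ is Fréchet differentiable. This is immediate because $X_T=S(T)x+\int_0^T S_{-\eta}(T-s)B\,dW_s$ is affine in $x$, so dominated convergence gives $\nabla P_T\varphi(x)=S(T)^*\E\nabla\varphi(X_T)$.

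\emph{Step 2: extension to bounded measurable $\varphi$.} This step is the main obstacle, since in infinite dimensions one cannot mollify. I follow \cite[Lemma 7.1.5]{DPZ-96}. First, for bounded uniformly continuous $\varphi$, I use the inf-convolution / sup-convolution regularizations (Lasry--Lions) to obtain $\varphi_n\in C_b^{1}$ with $\|\varphi_n\|_\infty\le\|\varphi\|_\infty$ and $\varphi_n\to\varphi$ pointwise. Dominated convergence passes the Lipschitz bound of Step 1 to the limit, so
\begin{equation*}
|P_T\varphi(x)-P_T\varphi(z)|\le C\|\varphi\|_\infty\|x-z\|_{\mathcal H}\quad\text{for all }\varphi\in UC_b(\mathcal H).
\end{equation*}
Taking the supremum over $\varphi\in UC_b$ with $\|\varphi\|_\infty\le1$ gives a total variation bound on transition probabilities, since $UC_b$ functions determine total variation on a metric space (by regularity of measures):
\begin{equation*}
\|P_T(x,\cdot)-P_T(z,\cdot)\|_{TV}\le C\|x-z\|_{\mathcal H}.
\end{equation*}
Equivalently, for every Borel $\Gamma$, approximate $1_\Gamma$ in $L^1$ of the measure $\tfrac12(P_T(x,\cdot)+P_T(z,\cdot))$ by Lipschitz functions bounded by $1$.

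\emph{Step 3: conclusion.} For bounded measurable $\varphi$, the total variation bound gives
\begin{equation*}
|P_T\varphi(x)-P_T\varphi(z)|\le \|\varphi\|_\infty\|P_T(x,\cdot)-P_T(z,\cdot)\|_{TV}\le C\|\varphi\|_\infty\|x-z\|_{\mathcal H}.
\end{equation*}
Hence $P_T\varphi$ is (Lipschitz) continuous on $\mathcal H$, which is the strong Feller property at time $T$.
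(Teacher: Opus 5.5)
Your argument is correct and follows the same route as the paper, which deduces the corollary from the gradient bound of Proposition \ref{prop SF} by invoking \cite[Lemma 7.1.5]{DPZ-96}. Your Steps 1--3 (mean value theorem, Lasry--Lions approximation of $UC_b$ functions by $C^{1,1}$ functions with bounded derivative, then a total variation bound) spell out that cited lemma and in fact give the Lipschitz estimate $|P_T\varphi(x)-P_T\varphi(z)|\le C\|\varphi\|_\infty\|x-z\|_{\mathcal H}$ for all bounded measurable $\varphi$.
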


\section{Moment method for the null-controllability}\label{Sec 3}
In this section, we establish the null-controllability for the heat equation via the moment method, which is applied in the next section to derive the strong Feller property. 

\vspace{3mm}

To begin with, let us first consider the controlled heat equation, which reads  
 \begin{equation}\label{controlled H equation}
    \begin{cases}
         \partial_tz-\Delta z=f(x)h(t),\quad x\in (0,1),\;t\in(0,T),\\
         z|_{x=0,1}=0,\\
        z(0,\cdot)=z_0(\cdot).
    \end{cases}
    \end{equation}
    Here $h\in L^2(0,T)$ denotes the control term. 
    
    \begin{proposition}\label{prop H control} Assume that there exist constants $C,\alpha>0$ and $s\in[0,1)$ such that $f\in H^{-s}(0,1)$ and   \begin{equation*}
        |\langle f,e_n\rangle|\geq C e^{-\alpha \pi^2n^2}\quad \forall\, n\in\N^+.
    \end{equation*} Then system \eqref{controlled H equation} is null-controllable on $L^2(0,1)$ at any time $T>\alpha$.
    \end{proposition}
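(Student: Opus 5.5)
Expand the state in the eigenbasis, $z(t)=\sum_n z_n(t)e_n$, and write $f_n=\langle f,e_n\rangle$, $z_{0,n}=\langle z_0,e_n\rangle$. Each mode satisfies $z_n'+\lambda_n z_n=f_nh(t)$, so
\begin{equation*}
z_n(T)=e^{-\lambda_nT}z_{0,n}+f_n\int_0^Te^{-\lambda_n(T-t)}h(t)\,dt .
\end{equation*}
Hence $z(T)=0$ is equivalent to the moment problem
\begin{equation*}
\int_0^T e^{-\lambda_n(T-t)}h(t)\,dt=-\frac{e^{-\lambda_nT}z_{0,n}}{f_n},\qquad n\in\N^+,
\end{equation*}
which makes sense because every $f_n\neq0$ by \eqref{eq f}. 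Since $f\in H^{-s}$ with $s<1$, the mild solution is well defined in $L^2(0,1)$ for $h\in L^2(0,T)$, which is the framework of Section~\ref{Sec 2}. So it suffices to solve this moment problem for every $z_0\in L^2(0,1)$, with $\|h\|_{L^2(0,T)}\le C\|z_0\|$.

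To solve it I would use the classical Fattorini--Russell biorthogonal family. Since $\sum_n 1/\lambda_n<\infty$ and the gap condition $\lambda_{n+1}-\lambda_n\ge 3\pi^2$ holds, for every $T>0$ and every $\varepsilon>0$ there is a family $\{q_m\}\subset L^2(0,T)$ with
\begin{equation*}
\int_0^T e^{-\lambda_n(T-t)}q_m(t)\,dt=\delta_{nm},\qquad \|q_m\|_{L^2(0,T)}\le C_{\varepsilon,T}\,e^{\varepsilon\lambda_m}.
\end{equation*}
This is Fattorini--Russell \cite{FR-71} (see also \cite{TW-09,Coron-07}). For $\lambda_n=\pi^2n^2$ the sharp bound is $e^{C\sqrt{\lambda_m}}$, which is even better. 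I would cite this result rather than reprove it. If a proof is wanted, the standard route is to construct an entire function of exponential type via Weierstrass products $\prod_{k\neq m}(1+iz/\lambda_k)/(1-\lambda_m/\lambda_k)$, multiply it by a suitable decaying multiplier, and apply the Paley--Wiener theorem.

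Then set
\begin{equation*}
h=-\sum_m \frac{e^{-\lambda_mT}z_{0,m}}{f_m}\,q_m .
\end{equation*}
By biorthogonality this $h$ solves the moment problem formally. Using \eqref{eq f}, the terms are bounded by
\begin{equation*}
\frac{e^{-\lambda_mT}}{|f_m|}\|q_m\|\,|z_{0,m}|\le C^{-1}C_{\varepsilon,T}\,e^{-(T-\alpha-\varepsilon)\lambda_m}|z_{0,m}|.
\end{equation*}
Because $T>\alpha$, choose $\varepsilon=(T-\alpha)/2$. The series then converges absolutely in $L^2(0,T)$, and Cauchy--Schwarz gives
\begin{equation*}
\|h\|_{L^2(0,T)}\le C'\Big(\sum_m e^{-(T-\alpha)\lambda_m}\Big)^{1/2}\|z_0\|\le C''\|z_0\|.
\end{equation*}
Absolute convergence also justifies exchanging the sum with $\int_0^T e^{-\lambda_n(T-t)}\cdot\,dt$, so $z_n(T)=0$ for every $n$, that is, $z(T)=0$.

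The main obstacle is the existence of the biorthogonal family with subexponential-in-$\lambda_m$ norm bounds, uniformly for the fixed $T$. Everything else is bookkeeping. The threshold $T>\alpha$ comes exactly from balancing the natural decay $e^{-\lambda_mT}$ against the loss $e^{\alpha\lambda_m}$ coming from \eqref{eq f}. The $e^{\varepsilon\lambda_m}$ cost of the biorthogonal family is absorbed into the strict inequality $T>\alpha$.
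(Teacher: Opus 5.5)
Your proposal is correct and follows the paper's proof essentially step by step: the same spectral reduction to the moment problem, a biorthogonal family to $\{e^{-\lambda_n t}\}$ with subexponential norm growth, and the same Cauchy--Schwarz summation using $|f_n|\ge Ce^{-\alpha\lambda_n}$ and $T>\alpha$. The only difference is that you cite the Fattorini--Russell bound $\|q_m\|\le C_{\varepsilon,T}e^{\varepsilon\lambda_m}$, whereas the paper builds $\theta_n$ from the distance $d_{n,T}$ via the M\"untz theorem and Schwartz's restriction estimate, obtaining the explicit bound $\|\theta_n\|\le Ce^{\pi n}$.
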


    \begin{remark}
    The proof below is based on the classical moment method and is standard; see, e.g., {\rm \cite{FR-71}}. We include the details for the reader's convenience and also to make explicit how the lower bound on the Fourier coefficients of $f$ enters the construction of the control.
    \end{remark}

    \begin{proof}[Proof of Proposition \ref{prop H control}]
    Recall that $e_n(x)=\sqrt{2}\sin(\pi n x)$ and $\lambda_n=\pi^2n^2$ denote the eigenfunction and eigenvalue of $-\Delta$ with Dirichlet boundary condition on $(0,1)$, respectively. Let us decompose both $y$ and $f$ on this basis by
    \begin{align*}
        &z(t,x)=\sum_{n\in\N^+}z_n(t)e_n(x),\quad z_n(t)=\langle z,e_n\rangle,\\
        &f(x)=\sum_{n\in\N^+}f_ne_n(x),\quad f_n=\langle f,e_n\rangle.
    \end{align*}
    With this notation, equation \eqref{controlled H equation} is equivalent to the ODE system
    \begin{equation}\label{eq H ode}
        z_n'(t)+\lambda_nz_n(t)=f_nh(t),\quad z_n(0)=z_{n,0}=\langle z_0,e_n\rangle,\quad n\in\N^+.
    \end{equation}
    Note that the solution of equation \eqref{eq H ode}  is given by
    \begin{equation*}
        z_n(t)=e^{-\lambda_nt}z_{n,0}+f_n\int_0^te^{-\lambda_n(t-s)}h(s)ds,\quad n\in\N^+,\;t>0.
    \end{equation*}
    Therefore, the null-controllability of system \eqref{controlled H equation} at time $T>0$ is reduced to find $h\in L^2(0,T)$ such that
    \begin{equation*}
        e^{-\lambda_nT}z_{n,0}+f_n\int_0^Te^{-\lambda_n(T-t)}h(t)dt=0,\quad \forall\, n\in\N^+.
    \end{equation*}
    Under our assumptions, $f_n\neq 0$ for each $n\in\N^+$. The above relation can be rewritten as
    \begin{equation}\label{eq H m-problem}
         \int_0^Te^{-\lambda_nt}h(T-t)dt=-\frac{e^{-\lambda_nT}}{f_n}z_{n,0},\quad \forall\, n\in\N^+.
    \end{equation}

    \vspace{3mm}
    In what follows, we shall determine a sequence $\{\theta_n\}_{n\in\N^+}$  of functions biorthogonal to the set $\{e^{-\lambda_nt}\}_{n\in\N^+}$ in $L^2(0,T)$, i.e.,
    \begin{equation}\label{eq H basis}
        \int_0^Te^{-\lambda_nt}\theta_m(t)dt=\begin{cases}
            1,\quad n=m,\\
            0,\quad n\neq m.
        \end{cases}
    \end{equation}
    In particular, using this expression, the moment problem \eqref{eq H m-problem} has a formal solution
    \begin{equation}\label{eq H h-def}
        h(t)=-\sum_{n\in\N^+}\frac{e^{-\lambda_nT}}{f_n}z_{n,0}\theta_n(T-t),\quad t\in(0,T).
    \end{equation}
    We then in the next step show that the series equation \eqref{eq H h-def} is convergent in $L^2(0,T)$,  thereby completing the proof of Proposition \ref{prop H control}.

    \vspace{3mm}
    Set 
    \begin{equation*}
        E_{n,T}:=\overline{\text{span}}\{e^{-\lambda_kt}\in L^2(0,T):k\in\N^+,k\neq n\},\quad n\in\N^+.
    \end{equation*}
    Noticing that $\sum_{n\in\N^+}\lambda_n^{-1}<\infty$, by the Müntz theorem, see e.g. \cite{Schwartz-59},  this leads that $e^{-\lambda_nt}\notin E_{n,T}$.   Thus there exists a unique function $r_n\in E_{n,T}$ such that lies closest to the function $e^{-\lambda_n t}$ in the $L^2(0,T)$-norm. We now define $\theta_n$ by
    \begin{equation*}
        \theta_n(t):=\frac{e^{-\lambda_nt}-r_n(t)}{d_{n,T}^2},\quad n\in\N^+,
    \end{equation*}
    where
    \begin{equation*}
        d_{n,T}:=\left(\int_0^T(e^{-\lambda_nt}-r_n(t))^2dt\right)^{\frac{1}{2}}.
    \end{equation*}
    By construction, relation \eqref{eq H basis} is satisfied.

    \vspace{3mm}

    It remains to verify the convergence of the series in equation \eqref{eq H h-def}. To begin with, note that, by construction
    \begin{equation*}
        \|\theta_n\|_{L^2(0,T)}=d_{n,T}^{-1}.
    \end{equation*}
    Additionally, the distance between $e^{-\lambda_nt}$  and the closed subspace 
     \begin{equation*}
        E_{n,\infty}:=\overline{\text{span}}\{e^{-\lambda_kt}\in L^2(\R^+):k\in\N^+,k\neq n\}
    \end{equation*}
    is given by
    \begin{equation*}
        d_{n,\infty}^2=\frac{1}{2\lambda_n}\prod_{k\neq n}\left(\frac{\lambda_k-\lambda_n}{\lambda_k+\lambda_n}\right)^2,
    \end{equation*}
    see e.g. \cite{MZ-11}. Moreover, by \cite{Schwartz-59}, the injection mapping from $E_{n,\infty}$ into $E_{n,T}$, carrying each function in $E_{n,\infty}$ into its restriction to $(0,T)$ admits a bounded inverse. As a result, for each $T>0$, there exists a constant $C_T>0$ such that
    \begin{equation*}
        d_{n,T}^2\geq C_T^{-2}d_{n,\infty}^2\quad\forall\,n\in\N^+.
    \end{equation*}

    Summarizing these relations, we derive that
    \begin{align*}
        \|\theta_n\|_{L^2(0,T)}&=d_{n,T}^{-1}\leq C_Td_{n,\infty}^{-1}\\
        &\leq C_T\sqrt{2\lambda_n}\left|\prod_{k\neq n}\frac{\lambda_k+\lambda_n}{\lambda_k-\lambda_n}\right|\leq C_T\sqrt{2}\pi n\left|\frac{\prod_{k\neq n}(1+\frac{n^2}{k^2})}{\prod_{k\neq n}(1-\frac{n^2}{k^2})}\right|\\
        &=C_T\sqrt{2}\pi n\frac{\sinh(\pi n)}{\pi n}\leq Ce^{\pi n}.
    \end{align*}

    Consequently, invoking this inequality and our assumptions, we conclude that
    \begin{align*}
        \|h\|_{L^2(0,T)}&\leq \sum_{n\in\N^+}\frac{e^{-\lambda_nT}}{|f_n|}|z_{n,0}|\|\theta_n\|_{L^2(0,T)}\\
        &\leq C\sum_{n\in\N^+}\frac{e^{-T\pi^2n^2+\pi n}}{|f_n|}|z_{n,0}|\\
        &\leq C\|z_0\|\left(\sum_{n\in\N^+}e^{-2(T-\alpha)\pi^2n^2+2\pi n}\right)^{\frac{1}{2}}\\
        &\leq C\|z_0\|,
    \end{align*}
    provided that $T>\alpha$. Since the series defining $h$ converges in $L^2(0,T)$, testing it against $e^{-\lambda_n(T-\cdot)}\in L^2(0,T)$ and using the biorthogonality relation \eqref{eq H basis} shows that \eqref{eq H m-problem} holds for every $n\in\N^+$. The proof of Proposition \ref{prop H control} is thus completed.
    
    \end{proof}

\begin{remark}\label{rem1}
    The condition \eqref{eq f} shows that sufficiently non-degenerate forcing in every Fourier mode leads to null controllability, and hence to the strong Feller property. In this sense, the condition is close to being sharp: if the forcing degenerates completely in one mode, then both properties fail.

    \vspace{2mm}

    Indeed, if $f_m=0$ for some $m\in\N^+$, then the $m$-th component of \eqref{eq H ode} becomes
    \begin{equation*}
        z_m'(t)+\lambda_mz_m(t)=0,
        \quad z_m(0)=z_{m,0}.
    \end{equation*}
    Hence
    \begin{equation*}
        z_m(T)=e^{-\lambda_mT}z_{m,0}.
    \end{equation*}
    Taking $z_{m,0}\neq 0$, we obtain $z_m(T)\neq 0$ for every $T>0$. Thus the system cannot be null-controllable on $L^2(0,1)$.

    \vspace{2mm}

    The same obstruction also appears for the strong Feller property of the corresponding linear stochastic equation driven by the noise direction $f$. If $f_m=0$, then the $m$-th Fourier mode of the solution is deterministic:
    \begin{equation*}
        X_m(T)=e^{-\lambda_mT}x_m.
    \end{equation*}
    Let $\psi\colon\R\rightarrow\R$ be a bounded measurable function which is not continuous, and set
    \begin{equation*}
        \varphi(x)=\psi(\langle x,e_m\rangle).
    \end{equation*}
    Then
    \begin{equation*}
        P_T\varphi(x)=\psi(e^{-\lambda_mT}\langle x,e_m\rangle).
    \end{equation*}
    Since $e^{-\lambda_mT}\neq 0$, $P_T\varphi$ is not continuous on $L^2(0,1)$. Therefore $P_T$ is not strong Feller.
\end{remark}

    \section{Strong Feller property for the stochastic heat equation}\label{Sec 4}

    With the preparations from Sections \ref{Sec 2}-\ref{Sec 3}, we are now able to establish strong Feller property for the stochastic heat equation \eqref{H equation}, i.e. Theorem \ref{thm H}.

    \vspace{3mm}

    \begin{proof}[Proof of Theorem \ref{thm H}]        
    To align with the abstract setting in Section \ref{Sec 2}, let us set
\begin{equation*}
    \mathcal H=L^2(0,1),\quad U=\R,\quad A=\Delta \text{ with Dirichlet boundary condition on }(0,1),
\end{equation*}
\begin{equation*}
    B\colon\R\rightarrow H^{-s}(0,1),\quad r\mapsto rf(\cdot),
\end{equation*}
and
\begin{equation*}
    \mathcal H_{-\eta}=H^{-s}(0,1)\footnote{Here $\mathcal H_{-\eta}$ is the extrapolation space defined by \eqref{eq H-eta}. Under the Dirichlet spectral decomposition, it is characterized by $\sum_{n\in\N^+}\lambda_n^{-2\eta}|g_n|^2<\infty$, and therefore coincides with the spectral negative Sobolev space $H^{-s}(0,1)$ when $\eta=s/2$.}
    =
    \left\{
    f=\sum_{n\in\N^+} f_ne_n:
    \sum_{n\in\N^+}\lambda_n^{-s}|f_n|^2<\infty
    \right\}
    \quad \text{with}\quad \eta=\frac{s}{2}.
\end{equation*}
Then $B\in\mathcal L(\R;H^{-s}(0,1))$. Additionally, for any $\beta\in(0,\frac{1-s}{2})$ and $T>0$,
    \begin{equation*}
    \int_0^Tt^{-2\beta}\|e^{t\Delta }f\|^2dt\leq C\|f\|^2_{H^{-s}}\int_0^Tt^{-s-2\beta}dt<\infty.
    \end{equation*}

    Then by Proposition \ref{prop SF} and Corollary \ref{coro SF}, to complete the proof Theorem \ref{thm H}, it suffices to verify the null-controllability  of the associated deterministic control system
    \begin{equation*}
    \begin{cases}
         \partial_tz-\Delta z=f(x)h(t),\quad x\in (0,1),\;t\in(0,T),\\
         z|_{x=0,1}=0,\\
        z(0,\cdot)=z_0(\cdot),
    \end{cases}
    \end{equation*}
    where $h\in L^2(0,T)$ denotes the control.

    This null controllability property is precisely guaranteed by Proposition~\ref{prop H control}. Therefore, the strong Feller property stated in Theorem~\ref{thm H} follows, and the proof is complete.

    \end{proof}

    \appendix

\section{Well-posedness of the stochastic convolution}\label{app A}

In this appendix, we recall a standard consequence of the factorization method, following \cite[Theorem 5.2.6]{DPZ-96}, to justify the $\mathcal H$-valued continuity of the stochastic convolution used in Section~\ref{Sec 1.2}.

\begin{lemma}\label{lem sto conv}
    Let $T>0$. Assume that $B\in\mathcal L(U;\mathcal H_{-\eta})$ for some $\eta\in[0,1)$ and that there exists $\beta\in(0,1/2)$ such that
    \begin{equation}\label{A HS condition}
        \int_0^T t^{-2\beta}\|S_{-\eta}(t)B\|_{\mathcal L_2(U;\mathcal H)}^2dt<\infty.
    \end{equation}
    Then the stochastic convolution
    \begin{equation*}
        W_A(t):=\int_0^tS_{-\eta}(t-s)BdW_s,\quad t\in[0,T],
    \end{equation*}
    is well defined and admits an $\mathcal H$-valued continuous modification. Moreover,
    \begin{equation*}
        W_A\in L^2(\Omega;C([0,T];\mathcal H)).
    \end{equation*}
\end{lemma}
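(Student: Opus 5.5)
We follow the factorization method of Da Prato--Zabczyk \cite[Theorem 5.2.6]{DPZ-96}, adapted to the extrapolated semigroup $S_{-\eta}$. First we check that $W_A(t)$ is well defined as an $\mathcal H$-valued Gaussian random variable for each $t$. Since $B\in\mathcal L(U;\mathcal H_{-\eta})$, the smoothing bound \eqref{eq extrap smoothing} shows that $s\mapsto S_{-\eta}(t-s)B$ is a strongly measurable $\mathcal L(U;\mathcal H)$-valued map on $(0,t)$. Moreover, because $t^{-2\beta}\ge T^{-2\beta}$ on $(0,T]$, condition \eqref{A HS condition} gives $\int_0^T\|S_{-\eta}(r)B\|_{\mathcal L_2(U;\mathcal H)}^2dr<\infty$. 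The Itô isometry in $\mathcal L_2(U;\mathcal H)$ then makes $W_A(t)$ a well-defined element of $L^2(\Omega;\mathcal H)$.

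For continuity we use the identity $\int_\sigma^t (t-r)^{\beta-1}(r-\sigma)^{-\beta}dr=\pi/\sin(\pi\beta)$. Together with the semigroup property $S_{-\eta}(t-\sigma)=S(t-r)S_{-\eta}(r-\sigma)$ and the stochastic Fubini theorem, it yields the factorization
\begin{equation*}
W_A(t)=\frac{\sin(\pi\beta)}{\pi}\int_0^t (t-r)^{\beta-1}S(t-r)Y_\beta(r)\,dr,\qquad Y_\beta(r):=\int_0^r (r-\sigma)^{-\beta}S_{-\eta}(r-\sigma)B\,dW_\sigma .
\end{equation*}
The semigroup property needed here holds because $S_{-\eta}(r-\sigma)B$ already takes values in $\mathcal H$, where $S_{-\eta}$ coincides with $S$. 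To justify stochastic Fubini we need
\[
\int_0^t\!\!\int_0^r (t-r)^{\beta-1}(r-\sigma)^{-\beta}\|S_{-\eta}(t-\sigma)B\|_{\mathcal L_2}\,d\sigma\,dr<\infty .
\]
This follows by writing $S_{-\eta}(t-\sigma)B=S(t-r)S_{-\eta}(r-\sigma)B$, using $\|S(t-r)\|\le M$, and applying Cauchy--Schwarz together with \eqref{A HS condition}.

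Next we bound $Y_\beta$. By the Itô isometry,
\[
\E\|Y_\beta(r)\|^2=\int_0^r u^{-2\beta}\|S_{-\eta}(u)B\|_{\mathcal L_2(U;\mathcal H)}^2du\le K<\infty
\]
uniformly in $r\in[0,T]$, by \eqref{A HS condition}. Since $Y_\beta(r)$ is Gaussian, all moments are controlled: $\E\|Y_\beta(r)\|^{2m}\le C_mK^m$. Hence $Y_\beta\in L^{2m}(\Omega\times(0,T);\mathcal H)$ for every $m$, and in particular $Y_\beta\in L^{2m}(0,T;\mathcal H)$ almost surely.

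The main step is the deterministic lemma that, for $p$ with $\beta>1/p$, the operator
\[
R_\beta g(t):=\int_0^t(t-r)^{\beta-1}S(t-r)g(r)dr
\]
maps $L^p(0,T;\mathcal H)$ continuously into $C([0,T];\mathcal H)$. Boundedness follows from Hölder's inequality: $(\beta-1)p'>-1$ exactly when $\beta>1/p$. Continuity is proved first for smooth $g$, using strong continuity of $S$, and then extended by density together with the uniform bound. Choosing $p=2m$ large enough that $\beta>1/p$, which is possible since $\beta>0$, we define the modification as $\frac{\sin(\pi\beta)}{\pi}R_\beta Y_\beta$. This is continuous, and
\[
\E\sup_{t\le T}\|W_A(t)\|^{2}\le C\,\big(\E\|Y_\beta\|_{L^{p}(0,T;\mathcal H)}^{p}\big)^{2/p}<\infty,
\]
which gives $W_A\in L^2(\Omega;C([0,T];\mathcal H))$. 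The main obstacle is verifying the factorization and the stochastic Fubini step rigorously in the extrapolated setting; the estimates themselves are routine once the $\mathcal H$-valuedness of $S_{-\eta}(\cdot)B$ is in hand.
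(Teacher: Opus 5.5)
Your proposal is correct and follows essentially the same route as the paper. Both use the It\^o isometry for well-definedness, the factorization formula with Gaussian moment bounds on the auxiliary process, and the fact that $\mathcal R_\gamma$ maps $L^p(0,T;\mathcal H)$ into $C([0,T];\mathcal H)$ for $p>1/\gamma$; the paper cites this last fact from Da Prato--Zabczyk, while you prove it via H\"older plus density.

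The differences are minor. You take the factorization exponent equal to $\beta$ instead of some $\gamma\in(0,\beta)$, which is legitimate because \eqref{A HS condition} already bounds $\E\|Y_\beta(r)\|^2$ uniformly in $r$. Also, the integrability hypothesis of the stochastic Fubini theorem is $\int_0^t(t-r)^{\beta-1}\bigl(\int_0^r(r-\sigma)^{-2\beta}\|S_{-\eta}(t-\sigma)B\|_{\mathcal L_2(U;\mathcal H)}^2\,d\sigma\bigr)^{1/2}dr<\infty$, not the double $L^1$ bound you wrote. It nonetheless follows at once from $\|S(t-r)\|\le M$ and \eqref{A HS condition}.
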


\begin{proof}
    By the Itô isometry, for every $t\in[0,T]$,
    \begin{align*}
        \E\|W_A(t)\|_{\mathcal H}^2&=\int_0^t\|S_{-\eta}(t-s)B\|_{\mathcal L_2(U;\mathcal H)}^2ds=\int_0^t\|S_{-\eta}(r)B\|_{\mathcal L_2(U;\mathcal H)}^2dr<\infty,
    \end{align*}
    where the last inequality follows from \eqref{A HS condition}. Hence $W_A(t)$ is well defined as an $\mathcal H$-valued random variable.

    We next prove the continuity. Choose $\gamma\in(0,\beta)$ and define
    \begin{equation*}
        Y_\gamma(t):=\int_0^t(t-s)^{-\gamma}S_{-\eta}(t-s)BdW_s.
    \end{equation*}
    Again by the Itô isometry and \eqref{A HS condition},
    \begin{align*}
        \sup_{t\in[0,T]}\E\|Y_\gamma(t)\|_{\mathcal H} ^2\leq T^{2(\beta-\gamma)}\int_0^T r^{-2\beta}\|S_{-\eta}(r)B\|_{\mathcal L_2(U;\mathcal H)}^2dr<\infty.
    \end{align*}
    Since $Y_\gamma(t)$ is Gaussian, for every $p\geq2$, 
    \begin{equation}\label{A Ygamma}
        Y_\gamma\in L^p(\Omega;L^p(0,T;\mathcal H)).
    \end{equation}

    Take $p>1/\gamma$. By the factorization formula,
    \begin{equation}\label{A factorization}
        W_A(t)=\frac{\sin(\pi\gamma)}{\pi}\int_0^t(t-r)^{\gamma-1}S(t-r)Y_\gamma(r)dr,
        \quad t\in[0,T].
    \end{equation}
    Indeed, this follows from stochastic Fubini's theorem, the consistency of the semigroups, and the identity
    \begin{equation*}
        \int_s^t(t-r)^{\gamma-1}(r-s)^{-\gamma}dr=\frac{\pi}{\sin(\pi\gamma)}.
    \end{equation*}

    By \cite[Proposition A.1.1(ii)]{DPZ-96}, since $p>1/\gamma$, the deterministic operator
\begin{equation*}
    \mathcal R_\gamma z(t):=\int_0^t(t-r)^{\gamma-1}S(t-r)z(r)dr
\end{equation*}
maps $L^p(0,T;\mathcal H)$ continuously into $C^\delta([0,T];\mathcal H)$ for every $\delta\in(0,\gamma-\frac1p)$. In particular, it maps $L^p(0,T;\mathcal H)$ continuously into $C([0,T];\mathcal H)$.

    Combining this mapping property with \eqref{A Ygamma} and \eqref{A factorization}, we obtain
    \begin{equation*}
        W_A\in L^p(\Omega;C([0,T];\mathcal H)).
    \end{equation*}
    In particular, choosing $p\geq2$ gives
    \begin{equation*}
        W_A\in L^2(\Omega;C([0,T];\mathcal H)).
    \end{equation*}
    This completes the proof.
\end{proof}
    
    \normalem
    \bibliographystyle{plain}
    \bibliography{References}

\begin{thebibliography}{10}

\bibitem{Amann-88}
H.~Amann.
\newblock Parabolic evolution equations in interpolation and extrapolation spaces.
\newblock {\em J. Funct. Anal.}, 78(2):233--270, 1988.

\bibitem{Beauchard-05}
K.~Beauchard.
\newblock Local controllability of a {1-D Schrödinger} equation.
\newblock {\em J. Math. Pures Appl.}, 9(84):851--956, 2005.

\bibitem{BBPS-22c}
J.~Bedrossian, A.~Blumenthal, and S.~Punshon-Smith.
\newblock Lagrangian chaos and scalar advection in stochastic fluid mechanics.
\newblock {\em J. Eur. Math. Soc. (JEMS)}, 24(6):1893--1990, 2022.

\bibitem{CXZZ-25}
Y.~Chen, S.~Xiang, Z.~Zhang, and J.-C. Zhao.
\newblock Exponential mixing for the randomly forced {NLS} equation.
\newblock {\em arXiv:2506.10318}, 2025.

\bibitem{Coron-07}
J.-M. Coron.
\newblock {\em Control and nonlinearity}.
\newblock American Mathematical Society, Providence, RI, 2007.

\bibitem{DPZ-91}
G.~Da~Prato and J.~Zabczyk.
\newblock Smoothing properties of transition semigroups in {Hilbert} spaces.
\newblock {\em Stochastics}, 35:63--77, 1991.

\bibitem{DPZ-92}
G.~Da~Prato and J.~Zabczyk.
\newblock {\em Stochastic equations in infinite dimensions}.
\newblock Cambridge University Press, Cambridge, 1992.

\bibitem{DPZ-96}
G.~Da~Prato and J.~Zabczyk.
\newblock {\em {E}rgodicity for infinite dimensional systems}.
\newblock Cambridge University Press, Cambridge, 1996.

\bibitem{EL-94}
K.~D. Elworthy and X.-M. Li.
\newblock Formulae for the derivatives of heat semigroups.
\newblock {\em J. Funct. Anal.}, 125(1):252--286, 1994.

\bibitem{EN-00}
K.-J. Engel and R.~Nagel.
\newblock {\em One-Parameter Semigroups for Linear Evolution Equations}.
\newblock Springer, New York, 2000.

\bibitem{FR-71}
H.~O. Fattorini and D.~L. Russell.
\newblock Exact controllability theorems for linear parabolic equations in one space dimension.
\newblock {\em Arch. Rational Mech. Anal.}, 43:272--292, 1971.

\bibitem{GHXZ-22}
L.~Gagnon, A.~Hayat, S.~Xiang, and C.~Zhang.
\newblock Fredholm transformation on {L}aplacian and rapid stabilization for the heat equation.
\newblock {\em J. Funct. Anal.}, 283(12):Paper No. 109664, 67 pp, 2022.

\bibitem{GM-05}
B.~Goldys and B.~Maslowski.
\newblock Exponential ergodicity for stochastic {B}urgers and 2{D} {N}avier-{S}tokes equations.
\newblock {\em J. Funct. Anal.}, 226(1):230–255, 2005.

\bibitem{GP-23}
B.~Goldys and S.~Peszat.
\newblock Linear parabolic equation with {Dirichlet} white noise boundary conditions.
\newblock {\em J. Differential Equations}, 362:382--437, 2023.

\bibitem{GP-26}
B.~Goldys and S.~Peszat.
\newblock Differentiability of transition semigroup of generalized {Ornstein--Uhlenbeck} process: a probabilistic approach.
\newblock {\em Ann. Sc. Norm. Super. Pisa Cl. Sci.}, 2026.
\newblock 29 pp.

\bibitem{GLLL-24}
F.~Gong, Y.~Liu, Y.~Liu, and Z.~Liu.
\newblock Ergodicity for eventually continuous {Markov--Feller semigroups on Polish spaces}.
\newblock {\em Sci. China Math.}, 2026, early access.

\bibitem{Hairer-11}
M.~Hairer.
\newblock On {M}alliavin's proof of {Hörmander's} theorem.
\newblock {\em Bull. Sci. Math.}, 135(6-7):650--666, 2011.

\bibitem{Hairer-18}
M.~Hairer.
\newblock The strong {F}eller property for singular stochastic {PDE}s.
\newblock {\em Ann. Inst. Henri Poincaré Probab. Stat.}, 54(3):1314--1340, 2018.

\bibitem{HM-06}
M.~Hairer and J.~C. Mattingly.
\newblock Ergodicity of the 2{D} {N}avier-{S}tokes equations with degenerate stochastic forcing.
\newblock {\em Ann. of Math. (2)}, 164(3):993--1032, 2006.

\bibitem{LWXZZ-24}
Z.~Liu, D.~Wei, S.~Xiang, Z.~Zhang, and J.-C. Zhao.
\newblock Exponential mixing for random nonlinear wave equations: weak dissipation and localized control.
\newblock {\em arXiv:2407.15058}, 2024.

\bibitem{LXZ-26}
Z.~Liu, S.~Xiang, and J.-C. Zhao.
\newblock Exponential mixing for the stochastic {Allen--Cahn} equation with localized white noise.
\newblock {\em arXiv:2605.06009}, 2026.

\bibitem{MZ-11}
S.~Micu and E.~Zuazua.
\newblock Regularity issues for the null-controllability of the linear 1-d heat equation.
\newblock {\em Systems Control Lett.}, 60(6):406--413, 2011.

\bibitem{Nualart-06}
D.~Nualart.
\newblock {\em The {M}alliavin calculus and related topics}.
\newblock Springer-Verlag, Berlin, second edition, 2006.

\bibitem{PZ-95}
S.~Peszat and J.~Zabczyk.
\newblock Strong {Feller} property and irreducibility for diffusions on {Hilbert} spaces.
\newblock {\em Ann. Probab.}, 23:157--172, 1995.

\bibitem{Russell-78}
D.~L. Russell.
\newblock Controllability and stabilizability theory for linear partial differential equations: recent progress and open questions.
\newblock {\em SIAM Rev.}, 20(4):639--739, 1978.

\bibitem{Schwartz-59}
L.~Schwartz.
\newblock {\em Étude des sommes d'exponentielles. 2ième éd}.
\newblock Publications de l'Institut de Mathématique de l'Université de Strasbourg, V. Actualités Sci. Ind., Hermann, Paris, 1959.

\bibitem{Shi-15}
A.~Shirikyan.
\newblock Control and mixing for 2{D} {N}avier-{S}tokes equations with space-time localised noise.
\newblock {\em Ann. Sci. \'{E}c. Norm. Sup\'{e}r}, 48(2):253--280, 2015.

\bibitem{Shi-21}
A.~Shirikyan.
\newblock Controllability implies mixing {II}. {C}onvergence in the dual-{L}ipschitz metric.
\newblock {\em J. Eur. Math. Soc. (JEMS)}, 23(4):1381--1422, 2021.

\bibitem{Szarek-06}
T.~Szarek.
\newblock {F}eller processes on nonlocally compact spaces.
\newblock {\em Ann. Probab.}, 34(5):1849--1863, 2006.

\bibitem{TW-09}
M.~Tucsnak and G.~Weiss.
\newblock {\em Observation and control for operator semigroups}.
\newblock Birkhäuser Verlag, Basel, 2009.

\bibitem{WZ-13}
F.-Y. Wang and X.-C. Zhang.
\newblock Derivative formula and applications for degenerate diffusion semigroups.
\newblock {\em J. Math. Pures Appl.}, 99:726--740, 2013.

\bibitem{Zhang-13}
X.~Zhang.
\newblock Derivative formulas and gradient estimates for {SDEs} driven by $\alpha$-stable processes.
\newblock {\em Stochastic Process. Appl.}, 123(4):1213--1228, 2013.

\end{thebibliography}

\end{document}